\newcommand{\ignore}[1]{}  
\newcommand{\lp}{ \left( }
\newcommand{\rp}{ \right) }
\newcommand{\xk}{ x_k }
\newcommand{\yk}{ y_k }
\newcommand{\uk}{ u_k }
\newcommand{\xkp}{ x_{k+1} }
\newcommand{\Rn}{ {\mathbb{R}}^n }
\newcommand{\Rm}{ {\mathbb{R}}^m }
\newcommand{\Rnn}{ {\mathbb{R}}^{n \times n}}
\newcommand{\Rnm}{ {\mathbb{R}}^{n \times m}}
\newcommand{\Rmm}{ {\mathbb{R}}^{m \times m}}
\newtheorem{theorem}{Theorem}[section]
\newtheorem{lemma}[theorem]{Lemma}
\begin{document}
\title{A Model-Free Data-Driven Algorithm for Continuous-Time Control}

\author{%
Sean R. Bowerfind\\ 
Department of ECE\\
Auburn University\\
Auburn, AL 36849\\
srb0063@auburn.edu
\and 
Matthew R. Kirchner\\
Department of ECE\\
Auburn University\\
Auburn, AL 36849\\
kirchner@auburn.edu
\and
Gary A. Hewer\\
Physics and Computational Sciences\\
Naval Air Warfare Center Weapons Division\\
China Lake, CA \\
gary.a.hewer.civ@us.navy.mil
\and
D. Reed Robinson\\
Autonomy and Signal Processing\\
Naval Air Warfare Center Weapons Division\\
China Lake, CA \\
david.r.robinson11.civ@us.navy.mil
\and
Paula Chen\\
Autonomy and Signal Processing\\
Naval Air Warfare Center Weapons Division\\
China Lake, CA \\
paula.x.chen.civ@us.navy.mil
\and
Alireza Farahmandi\\
Autonomy and Signal Processing\\
Naval Air Warfare Center Weapons Division\\
China Lake, CA \\
alireza.farahmandi.civ@us.navy.mil
\and
Katia Estabridis\\
Physics and Computational Sciences\\
Naval Air Warfare Center Weapons Division\\
China Lake, CA \\
katia.n.estabridis.civ@us.navy.mil
\thanks{\footnotesize 979-8-3503-5597-0/25/$\$31.00$ \copyright2025 IEEE}              
}

\maketitle

\thispagestyle{plain}
\pagestyle{plain}

\maketitle

\thispagestyle{plain}
\pagestyle{plain}

\begin{abstract}
Presented is an algorithm to synthesize an infinite-horizon LQR optimal feedback controller for continuous-time systems. The algorithm does not require knowledge of the system dynamics, but instead uses only a finite-length sampling of (possibly suboptimal) input-output data. The algorithm is based on a constrained optimization problem that enforces a necessary condition on the dynamics of the optimal value function along an arbitrary trajectory. This paper presents the derivation as well as shows examples applied to both linear and nonlinear systems inspired  by air vehicles.
\end{abstract}

\tableofcontents

\section{Introduction}\label{sec:introduction}
The linear quadratic regulator (LQR) has been one of the more useful state-space control design methodologies as it provides many beneficial properties, including simple hardware implementation and guarantees with respect to both phase and gain margin \cite{safonov1977gain,lehtomaki1981robustness}. Classical LQR design assumes linear dynamics and relies on complete knowledge of system matrices, $A$ and $B$, to compute the optimal feedback gain matrix. These matrices typically come from either direct knowledge of an inherently linear system or through a linearization of a known nonlinear system. 

For many systems, however, direct knowledge of the system is not known a-priori, but the collection of data is readily available. When the system is thought to be linear, it is possible to attempt model identification techniques to find the $A$ and $B$ matrices. In this case, a model-free approach can be seen to streamline the design procedure by going directly from data to LQR controller, skipping the model identification.

A more common situation arises when the system is nonlinear and an LQR controller is desired from a linearized approximation in a neighborhood around an equilibrium point. In this case, model identification of the nonlinear model directly, in order to later linearize, is more difficult. Therefore, a model-free approach should be practical in that data collected around an equilibrium point of a nonlinear system should give rise to an LQR controller that is similar to the controller generated by a model created by linearization. Additionally, any data-driven method must be robust to noise in the data, as no data can be collected from a real system without noise in the measurements. 

There have been some recent attempts to compute LQR directly from data \cite{vrabie_optimal_2013}, and typically are formulated for discrete-time systems. Most previous works use an online and so-called on-policy\footnote{For a discussion between on-policy and off-policy methods, see \cite[Sec. 2.1, p. 145]{luo2014off}.} methodology and that requires actively updating the new control policy to generate the next data sample. This may require the entire adaptive algorithm to be implemented on an onboard embedded processor, which is problematic for platforms that have limited processing resources or restrictions on editing the platform's firmware. Alternatively, an online method could be incorporated into a computer model or simulation of the desired platform. However, if the user has enough knowledge of the system to construct a simulation, then the user has enough system knowledge for traditional design techniques and model-free approaches are unnecessary.

An example of one such online method, distinctive in that it considers continuous-time dynamics, is presented in \cite{vamvoudakis_q-learning_2017}. An adaptive control technique is developed in this work to update candidate LQR control solutions which require careful tuning of hyperparameters, since otherwise it can be susceptible numerical problems.

Offline approaches have been investigated. Most notable is the work of \cite{farjadnasab2022model}. This work assumes discrete-time dynamics and uses this representation to remove the dependence on $A$. This formulation can induce a sensitivity to noise in the data, which will be more prominent when the sample time in the data becomes small. 

In this paper we present a practical model-free approach to synthesize an LQR controller directly from data. We assume the data is collected in the form of noisy observations of the state\footnote{Or observation of a function of the state.} and the control sequence that drives the system. Since the optimal cost-to-go for LQR has a known parametric form, we can evaluate the cost-to-go along any arbitrary trajectory. We show this cost-to-go must satisfy a specific dynamic constraint and is a necessary condition for the value function. We then construct a nonlinear programming problem (NLP) whereby the above dynamic constraint is enforced along the observed data and we minimize the observation error.

We demonstrate the proposed method on two examples. The first is where data is generated from a benchmark linear system, inspired by a Boeing 747. This example is chosen since we can compare the gain matrix computed from the model free method to the ground truth using traditional methods where $A$ and $B$ are known. The second example generates data from a nonlinear quadcopter by perturbing the system manually around the hover trim condition.   

\section{Preliminaries on LQR Optimal Control}\label{sec:LQR prelims}
Consider the following linear time-invariant continuous time dynamical system
\begin{equation}
    \begin{cases}
        \dot{x}(t) = f\lp x,u\rp :=  Ax(t) + Bu(t), \\
        x\lp 0\rp = x_0,
    \end{cases}
    \label{eq:LTIsys}
\end{equation}
for $t\in \left[0,\infty\right)$ where $x(t) \in \Rn$ is the system state and $u(t) \in \Rm$ is the control input. We assume without loss of generality that the initial time is always $t_0=0$ since the system \eqref{eq:LTIsys} is time invariant. As a shorthand, we denote the true state trajectory by $x\lp t\rp$: 
$\left[0,\infty \right)\ni t\mapsto x\lp t;x_0,u\lp\cdot\rp\rp\in\Rn$. When $t$ appears, we assume it is the above definition that evolves in time with measurable control sequence $u\lp\cdot\rp\in\Rm$,
according to \eqref{eq:LTIsys} starting from initial state $x_{0}$ at $t=0$. When $x$ appears without dependence on $t$, we are referring to a specific state without regard to an entire trajectory. 

We consider an infinite time horizon cost functional with an LQR running cost, $\ell\lp x,u\rp$, given by 
\begin{align}
J\lp x_{0},u\lp\cdot\rp\rp &=\int_{0}^{\infty} \ell\left(x\left(t\right),u\left(t\right)\right)dt \notag \\
 &=\int_{0}^{\infty} x\lp t\rp^{\top}Mx\lp t\rp+u\lp t\rp^{\top}Ru\lp t\rp dt,\label{eq:infinite cost funct}
\end{align}

with $x(0)=x_0$ and where both $\Rnn\ni M=M^\top \succeq 0$ and $\Rmm \ni R=R^\top \succ 0$ are the user-supplied weight matrices.

The value function $V:\Rn \rightarrow {\mathbb{R}}$ is defined as the minimum cost, $J$, among all admissible controls for an initial state $x_0$ given as
\begin{equation}
     V\lp x_0 \rp = \inf_{u\lp\cdot\rp\in\Rm}  J\lp x_0, u\lp(\cdot\rp \rp,
     \label{eq:value funct}
\end{equation}
subject to the dynamic system constraint \eqref{eq:LTIsys}. Finding a feedback control policy that minimizes \eqref{eq:value funct} is known widely as the infinite horizon linear quadratic regulator (LQR) problem. It is well known that the value function takes the form
\begin{equation}
    V(x) = x^\top P x,
    \label{eq:optimal value funct}
\end{equation}
where $\Rnn \ni P=P^\top \succeq 0$. Without loss of generality, we assume
\[
M=C^{\top}C,
\]
since $M$ is symmetric positive semi-definite \cite[Thm. 7.2.7, p. 440]{horn2012matrix}, but also could be seen
as constructing a cost functional \eqref{eq:infinite cost funct} in terms of a controlled output $z=Cx$ \cite[Chap. 20, p. 191]{hespanha2018linear}. It is well known \cite{kailath1980linear} that when the pair $\lp A,B\rp$
is stabilizable and the pair $\lp A,C\rp$ is detectable, then
$P$ is the unique positive semi-definite solution of the algebraic
Riccati (ARE) equation
\begin{equation}
    A^\top P + PA - PBR^{-1}B^\top P + M = 0,
    \label{eq:ARE}
\end{equation}
and the optimal closed loop control is
\begin{equation}
u^{*}=-Kx,\label{eq:optimal control}
\end{equation}
where
\begin{equation}
    K=R^{-1}B^{\top}P.\label{eq:LQR optimal K}
\end{equation}

\section{Implicit Model-Free Formulation}\label{sec:model-free formulation}
For any $P$ satisfying \eqref{eq:ARE}, we can evaluate the candidate value function,
\eqref{eq:optimal value funct}, along any (not necessarily optimal) trajectory of the system state. This is the scalar function $\left[0,\infty\right)\ni t\mapsto V\lp x\lp t\rp\rp\in{\mathbb{R}}$
and must satisfy along the trajectory $\left[0,\infty\right)\ni t\mapsto x\lp t\rp$:
\begin{align}
\dot{V}\lp x\lp t\rp\rp & =\frac{\partial V}{\partial x}\dot{x}\lp t\rp\nonumber \\
 & =\frac{\partial V}{\partial x}\lp Ax\lp t\rp+Bu\lp t\rp\rp\nonumber \\
 & =2x\lp t\rp^{\top}PAx\lp t\rp+2x\lp t\rp^{\top}PBu\lp t\rp,\label{eq:implicit value function}
\end{align}
where the last line is from
\[
\frac{\partial}{\partial x}\left\{ x^{\top}Px\right\} =2x^{\top}P.
\]
Under a set of mild Lipschitz continuity assumptions, there exists
a unique value function \eqref{eq:value funct} that satisfies the following stationary
(time-independent) Hamilton--Jacobi partial differential equation
(PDE)
\begin{equation}
\underset{u\in\Rm}{\min}H\lp x,u,\frac{\partial V}{\partial x}\rp=0,\label{eq:HJB equation for inf LQR}
\end{equation}
where
\[
H\lp x,u,p\rp:=\left\langle p,f\lp x,u\rp \right\rangle+\ell\lp x,u\rp.
\]
This implies that for the system given in \eqref{eq:LTIsys} and the cost functional
given in \eqref{eq:infinite cost funct} we have
\begin{align}
H\lp x,u,\frac{\partial V}{\partial x}\rp = &\frac{\partial V}{\partial x}\lp Ax+Bu\rp+x^{\top}Mx+u^{\top}Ru\nonumber \\
 =&2x^{\top}PAx+2x{}^{\top}PBu\nonumber\\
 &+x^{\top}Mx+u^{\top}Ru.\label{eq:raw hamiltonian}
\end{align}
We can determine the extremal control in \eqref{eq:HJB equation for inf LQR} by taking the gradient of \eqref{eq:raw hamiltonian}
with respect $u$, setting to zero, and solving. This results in
the following optimal control
\begin{equation}
u^{*}=-R^{-1}B^{\top}Px.\label{eq:optimal control hamiltonian}
\end{equation}
This implies that, after substituting
\eqref{eq:optimal control hamiltonian} into \eqref{eq:raw hamiltonian}, the equality in \eqref{eq:HJB equation for inf LQR} can be written as
\[
2x^{\top}PAx-x^{\top}PBR^{-1}B^{\top}Px+x^{\top}Mx=0,
\]
which gives the expression
\begin{equation}
2x^{\top}PAx=x^{\top}PBR^{-1}B^{\top}Px-x^{\top}Mx.\label{eq:Sub A out}
\end{equation}
When \eqref{eq:Sub A out} is substituted in \eqref{eq:implicit value function},
we remove our dependence on the knowledge of system matrix $A$,
and we have
\begin{align}
\dot{V}\lp x\lp t\rp\rp & =x\lp t\rp^{\top}PBR^{-1}B^{\top}Px\lp t\rp\nonumber \\
 & -x\lp t\rp^{\top}Mx\lp t\rp+2x\lp t\rp^{\top}PBu\lp t\rp.\label{eq:cont. time equality constraint}
\end{align}
Therefore, for any \emph{arbitrary} control sequence $u\left(t\right)$ and corresponding state trajectory $x\left(t\right)$,
the dynamic equality \eqref{eq:cont. time equality constraint} must hold for optimal $P$.

\section{Data-Driven Control Synthesis}\label{sec:data driven control}
We consider a trajectory segment of finite length on $\left[0,T\right]$
for some $0<T<\infty$, whereby $N$ samples are observed on the time
grid
\[
\pi^{N}=\left\{ t_{k}:k=0,\ldots N\right\} .
\]
The samples observed are the control inputs, $u\lp t_{k}\rp$,
and output $y\lp t_{k}\rp$, where we do not assume direct observation
of the full state, but rather a classical linear observation model perturbed
by noise
\[
y\lp t_{k}\rp =Cx\lp t_{k}\rp+\epsilon,
\]
where the noise follows a zero mean Gaussian distribution
\[
\epsilon\sim{\mathcal{N}}\lp 0,\Sigma\rp,
\]
with $\Sigma$ being the covariance matrix. We will use the shorthand
\begin{equation}\label{eq:notation}
\begin{cases}
\xk:=x\lp t_{k}\rp,\\
\yk:=y\lp t_{k}\rp,\\
\uk:=u\lp t_{k}\rp,\\
V_{k}:=V\lp x\lp t_{k}\rp\rp,
\end{cases}
\end{equation}
for any $k\in\pi^{N}$. Additionally, we denote $X$ as the collection
of all states defined by
\[
X:=\left[x_{0},\ldots,x_{N}\right],
\]
and we denote $Y$ as the collection of all output observations
defined by
\[
Y:=\left[y_{0},\ldots,y_{N}\right].
\]
We discretize the rate in \eqref{eq:cont. time equality constraint}
so that for any time $t_{k}$
\[
\dot{V}\lp x\lp t_{k}\rp\rp\approx D_{k}^{V},
\]
where we designate $D_{k}^{V}$ as a finite difference approximation
to $\dot{V}\lp x\lp t_{k}\rp \rp$. We will not require
any specific finite differencing form or method, so for simplicity
we use the first-order forward Euler
\begin{equation}
D_{k}^{V}=\frac{V_{k+1}-V_{k}}{\Delta t},\label{eq:finte diff in V}
\end{equation}
with $\pi^{N}$ consisting of uniformly spaces time samples\footnote{Recall that we define $N$ as the number of samples \emph{not} including
index 0.}
\[
\Delta t=\frac{T}{N}.
\]
Many approximations exist for $D_{k}^{V}$, for example higher-order
finite differencing schemes \cite{mathews2004numerical} as well as trajectory representations
for both uniformly spaced \cite{cichella2018bernstein} and non-uniform time grids \cite{elnagar1995pseudospectral}.
These can increase computational accuracy and can be applied to what
follows but are outside the scope of this work.

Proceeding with $D_{k}^{V}$ as in \eqref{eq:finte diff in V}, we write this as
\[
D_{k}^{V}=\frac{1}{\Delta t}\lp x_{k+1}^{\top}Px_{k+1}-x_{k}^{\top}Px_{k}\rp.
\]
Defining $S:=PB$, the equality \eqref{eq:cont. time equality constraint}
implies the following holds for all $k=0,\ldots N-1$: 
\begin{align}
\frac{1}{\Delta t}\lp x_{k+1}^{\top}Px_{k+1}-x_{k}^{\top}Px_{k}\rp & =x_{k}^{\top}SR^{-1}S^{\top}x_{k}\nonumber \\
 & -x_{k}^{\top}Mx_{k}+2x{}_{k}^{\top}Su_{k}.\label{eq:discrete equality}
\end{align}
Given the set of observations $\left\{ \uk,\yk\right\} _{k=0,\ldots,N}$
we seek to find the pair $\lp P,S\rp$, with $P$ being symmetric
positive semi-definite, that satisfies \eqref{eq:discrete equality}.
This is achieved by the following constrained optimization:
\begin{equation}
\begin{cases}
\underset{L,S,X}{\min} & \left\Vert \text{vec}\left(Y\right)-\text{vec}\left(CX\right)\right\Vert _{2}^{2}\\
\\
\underset{\forall k=0,\ldots,N-1}{\text{subject to}} & \frac{1}{\Delta t}\lp\xkp^{\top}L^{\top}L\xkp-\xk^{\top}L^{\top}L\xk\rp\\
 & -\xk^{\top}SR^{-1}S^{\top}\xk\\
 & +\xk^{\top}M\xk-2\xk^{\top}S\uk=0.
\end{cases}\label{eq:NLP}
\end{equation}
where $L$ is defined as $P=L^{\top}L$, and where $\text{vec}\lp\cdot\rp$
is the vectorize operator that reshapes a matrix into a column vector.
We solve for $L$ instead of $P$ directly as this enforces the constraint
that $P$ must be symmetric positive semi-definite \cite[Thm. 7.2.7, p. 440]{horn2012matrix}. The LQR
optimal feedback control follows from \eqref{eq:NLP} with $u=-Kx$ where
\[
K=R^{-1}S^{\top}.
\]

\subsection{A Note On Data Collection}

 Control inputs can be recorded from either an open-loop or a closed-loop process. The latter represents a case where a rudimentary controller is used to stabilize the system to enable data collection on an otherwise unstable system. Additionally, the model-free controller can be used to refine a suboptimal closed-loop controller based on real-world test data.
 
 As is the case with any method based solely on sampled data, care must be taken with respect to how the data is collected\footnote{As a concrete example, see the model-identification method known as Dynamic Mode Decomposition\cite{proctor2016dynamic}, and the corresponding note in \cite[p. 395]{brunton2022data}.}. Qualitatively, control inputs should produce a trajectory that adequately spans the $n$-dimensional state space of the original open-loop system. Typically, an assumption of
 \emph{persistent excitation}\cite{willems2005note,van2020willems} with respect to the input is made. Formal quantification of the diversity of the data, while an active area of current research, is beyond the scope of this current article and is left to future work.

\section{Connection To Q-Learning}\label{sec:Q-learning}
Many existing methods attempting data-driven approaches to find
the controller without knowledge of the system matrices, $A$ and
$B$, do so under the so-called reinforcement learning paradigm. We
will show here that the method presented above can be equivalently
seen as a variant of this framework. We note that while the implicit
dynamic constraints given in \eqref{eq:cont. time equality constraint} are mathematically equivalent in the
continuous setting to what follows, it is observed that \eqref{eq:discrete equality} has significantly
greater numeric stability when implemented on sampled-data.

We begin by taking a Q-learning \cite{watkins1989learning,watkins1992q} point of view to
the above problem, and define a Q-function for a continuous-time case \cite{luo2014q} with
\[
Q\lp t;x \lp \cdot \rp,u\lp\cdot\rp\rp:=\int_{t}^{\infty}x\lp\tau\rp^{\top}Mx\lp\tau\rp+u\lp\tau\rp^{\top}Ru\lp\tau\rp d\tau.
\]
Note that the Q-function is simply an evaluation of the cost functional,
$J$ from \eqref{eq:infinite cost funct}, evaluated on an arbitrary trajectory, starting at time
$t$. This implies the following semi-group property along a trajectory
\begin{align}
Q\lp t;x\lp \cdot\rp,u\lp\cdot\rp\rp = & Q\lp t+T;x\lp \cdot \rp,u\lp\cdot\rp\rp\nonumber \\
 +&\int_{t}^{t+T}x\lp\tau\rp^{\top}Mx\lp\tau\rp+u\lp\tau\rp^{\top}Ru\lp\tau\rp.\label{eq:Q semi-group}
\end{align}
Trivially, the optimal $Q$ function is equivalent to the value function,
i.e.
\[
Q^{*}\lp t\rp :=\underset{u\lp\cdot\rp}{\min}\,Q\lp t;x\lp \cdot \rp,u\lp\cdot\rp\rp=V\lp x\lp t \rp \rp.
\]
Proceeding with an \emph{advantage learning} framework \cite{baird_reinforcement_1994}, we
define the advantage function as
\begin{equation}
{\mathbb{A}}\lp t;x \lp \cdot \rp ,u\lp\cdot\rp\rp:=Q\lp t;x(\cdot),u\lp\cdot\rp\rp-V\lp x(t)\rp.\label{eq:Advantage def}
\end{equation}

The following lemma introduces one such advantage function that will be useful in our analysis.
\begin{lemma}\label{lem:advantage}
Suppose $P$ satisfies the algebraic Riccati equation. Then the following advantage function satisfies \eqref{eq:Q semi-group} $:$
\begin{equation}
{\mathbb{A}}\lp t;x \lp \cdot \rp,u\lp\cdot\rp\rp=\int_{t}^{\infty}\left\Vert u\lp\tau\rp+R^{-1}B^{\top}Px\lp\tau\rp\right\Vert _{R}^{2}d\tau,\label{eq:Advantage formula}
\end{equation}
where $\|\cdot\|_{R}=\sqrt{\langle\cdot,R\cdot\rangle}$ denotes the
norm induced by the symmetric positive definite matrix $R$.
\end{lemma}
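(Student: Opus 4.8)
The plan is to prove the identity by a completion-of-squares argument, using the fundamental theorem of calculus on the quadratic form $x\lp\tau\rp^{\top}Px\lp\tau\rp$ together with the algebraic Riccati identity \eqref{eq:Sub A out} to eliminate the unknown drift matrix $A$. I read the claim as follows: defining $Q := V + {\mathbb{A}}$ with ${\mathbb{A}}$ given by \eqref{eq:Advantage formula} and $V$ as in \eqref{eq:optimal value funct}, the resulting $Q$ obeys the semi-group relation \eqref{eq:Q semi-group}. The key is that the two sides of \eqref{eq:Q semi-group} differ only by the boundary variation of $V$, which the Riccati identity converts exactly into the running cost.

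First I would differentiate $V$ along an arbitrary trajectory. Since $V\lp x\rp = x^{\top}Px$ and $\dot x = Ax + Bu$, the chain rule gives $\frac{d}{d\tau}\lp x^{\top}Px\rp = 2x^{\top}PAx + 2x^{\top}PBu$, exactly as in \eqref{eq:implicit value function}. Because $P$ solves the ARE, the pointwise identity \eqref{eq:Sub A out}, namely $2x^{\top}PAx = x^{\top}PBR^{-1}B^{\top}Px - x^{\top}Mx$, holds for every state and hence along the entire trajectory. Substituting this into the derivative removes all dependence on $A$ and leaves $\frac{d}{d\tau}\lp x^{\top}Px\rp = x^{\top}PBR^{-1}B^{\top}Px - x^{\top}Mx + 2x^{\top}PBu$.

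Next I would integrate this over a finite window $\left[t, t+T\right]$ and apply the fundamental theorem of calculus, obtaining $V\lp x\lp t\rp\rp - V\lp x\lp t+T\rp\rp$ as the negative of that integral. In parallel, expanding the weighted norm in \eqref{eq:Advantage formula} gives $\|u + R^{-1}B^{\top}Px\|_{R}^{2} = u^{\top}Ru + 2x^{\top}PBu + x^{\top}PBR^{-1}B^{\top}Px$. Adding the boundary variation of $V$ to the windowed advantage integral $\int_{t}^{t+T}\|u+R^{-1}B^{\top}Px\|_{R}^{2}\,d\tau$, the cross terms $2x^{\top}PBu$ and the curvature terms $x^{\top}PBR^{-1}B^{\top}Px$ cancel, leaving precisely $\int_{t}^{t+T}\lp x^{\top}Mx + u^{\top}Ru\rp d\tau$, the running cost appearing in \eqref{eq:Q semi-group}. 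Since \eqref{eq:Advantage formula} is a tail integral, ${\mathbb{A}}\lp t\rp - {\mathbb{A}}\lp t+T\rp$ equals that same windowed integral, so regrouping the terms as $V + {\mathbb{A}}$ evaluated at $t$ and at $t+T$ yields \eqref{eq:Q semi-group} directly. Letting $T\to\infty$ and using ${\mathbb{A}}\to 0$ then recovers the closed form ${\mathbb{A}} = Q - V$ consistent with \eqref{eq:Advantage def}.

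I expect the only delicate point to be the behavior at infinity: the tail integral in \eqref{eq:Advantage formula} and the cost $Q$ are finite only for trajectories whose running cost is integrable, which requires $x\lp\tau\rp\to 0$ and hence $V\lp x\lp t+T\rp\rp\to 0$ as $T\to\infty$. Working with the finite window $\left[t,t+T\right]$ sidesteps this entirely for the semi-group statement, since no limit is taken there; the decay assumption is needed only when passing to the closed-form $Q - V$ expression. Everything else is routine algebraic bookkeeping driven by the single substitution \eqref{eq:Sub A out}.
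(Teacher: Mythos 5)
Your proposal is correct and takes essentially the same route as the paper: your combination of the chain rule, the ARE-derived identity \eqref{eq:Sub A out}, the expansion of $\left\Vert u+R^{-1}B^{\top}Px\right\Vert _{R}^{2}$, and the fundamental theorem of calculus reproduces exactly the paper's supporting Lemma \ref{lemma:2} (a completion of squares under the Riccati equation), and your tail-splitting of the advantage integral with the regrouping $Q=V+{\mathbb{A}}$ is precisely the paper's proof of Lemma \ref{lem:advantage}. Your closing remark on integrability at infinity is a reasonable observation the paper leaves implicit, but since no limit is taken in the semi-group identity it does not alter the argument.
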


The proof is given in the appendix.

Lemma \ref{lem:advantage} allows us to write $\eqref{eq:Q semi-group}$ as
\begin{align*}
 & V\lp x\lp t\rp\rp+\int_{t}^{t+T}\left\Vert u\lp\tau\rp+R^{-1}B^{\top}Px\lp\tau\rp\right\Vert _{R}^{2}d\tau\\
 & =V\lp x\lp t+T\rp\rp\\
 & +\int_{t}^{t+T}x\lp\tau\rp^{\top}Mx\lp\tau\rp+u\lp\tau\rp^{\top}Ru\lp\tau\rp d\tau.
\end{align*}
Referencing \eqref{eq:optimal value funct} and using the notation defined in \eqref{eq:notation} we arrive at the following equality:
\begin{align*}
 & \lp x_{k+1}^{\top}Px_{k+1}-x_{k}^{\top}Px_{k}\rp\\
 =&\int_{t}^{t+T}\left\Vert u\lp\tau\rp+R^{-1}B^{\top}Px\lp\tau\rp\right\Vert _{R}^{2}d\tau\\
  &-\int_{t}^{t+T}x\lp\tau\rp^{\top}Mx\lp\tau\rp+u\lp\tau\rp^{\top}Ru\lp\tau\rp d\tau.
\end{align*}
If Euler's method is used to discretize the integral
terms above, then after some algebra we arrive at the same expression as \eqref{eq:discrete equality}.

\section{Results}\label{sec:results}
Two results cases will now be presented. First, the solution methodology is verified using a known linear system. Data is generated from the linear system and a controller is found by solving \eqref{eq:NLP}. We refer to the state feedback gain matrix computed by the proposed model-free method as $K_{\text{MF}}$. We then compare the model-free gain matrix to the one computed when $A$ and $B$ are known using a classical ARE approach. We denote this gain matrix as $K_\text{LQR}$. This comparison is illustrated in Figure \ref{fig:flowchart}.

Second, the model-free controller methods are applied to stabilize an unknown nonlinear system about an equilibrium point. This example demonstrates the versatility of the approach, especially on systems that are unknown, difficult to linearize, or have high dimensional state spaces. The gain matrix from the model-free approach is compared to the gain matrix using $A$ and $B$ generated by linearizing the known nonlinear model about the equilibrium point.

All simulation results have been implemented using MATLAB R2024a and Simulink.  For the optimization in \eqref{eq:NLP} we use the NLP code IPOPT \cite{wachter2006implementation}, where
the constraint Jacobian was computed using automatic differentiation by the methods of \cite{andersson2019casadi}. The NLP was initialized by setting $X = Y $, $L = I \in \Rnn$, and $S = {\mathbf{1}} \in \Rnm$.

\begin{figure}
    \centering
    \includegraphics[width=1.0\linewidth]{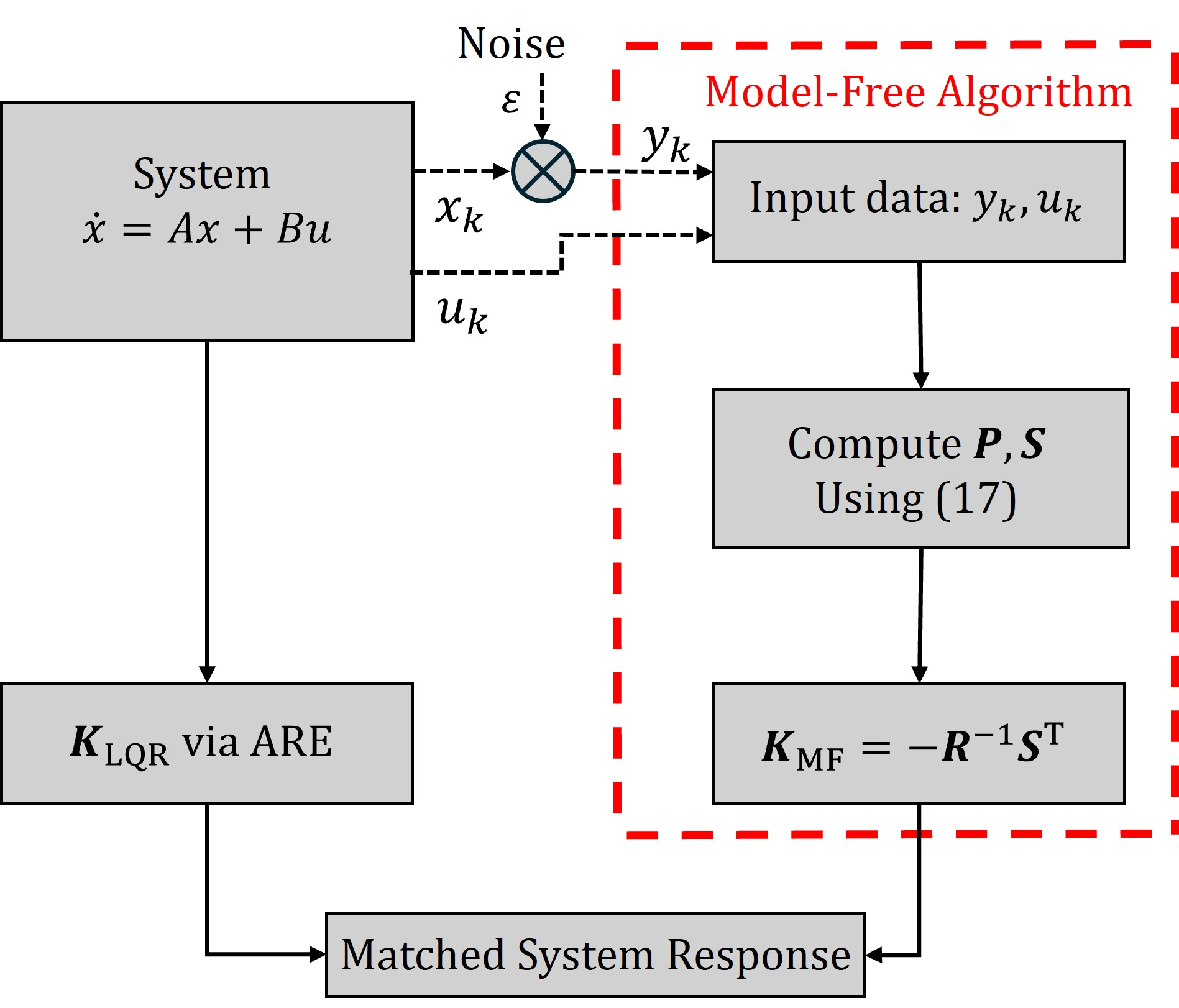}
    \caption{The testing paradigm for Example 1.}
    \label{fig:flowchart}
\end{figure}

\subsection{Known Linear System} \label{sec:results linear}

Our first example is the fourth-order linearized model of the lateral-perturbation equations of motion for an aircraft \cite[Sec. 10.1.2, p. 148]{bryson1994control}. Coefficients for the model are found from \cite[Sec. IX, p. 210]{heffley1972aircraft} and represent a Boeing 747 traveling at 774 ft/s (Mach 0.8) at an altitude of 40,000 ft. The lateral state vector is defined as $x = \begin{bmatrix} \beta & r & p & \phi \end{bmatrix}^\top$, where the state elements are sideslip angle, yaw rate, roll rate, and roll angle, respectively. The input is the rudder deflection, $u = \delta r$, and the model is as follows:
\begin{equation}
\begin{aligned}
        \dot x = & \begin{bmatrix}
            -0.0558 & -0.9968 & 0.0802 & 0.0415 \\
            0.598 & -0.115 & -0.0318 & 0\\
            -3.05 & 0.388 & -0.4650 & 0\\
            0 & 0.0805 & 1 & 0\end{bmatrix}x \\ & + \begin{bmatrix} 0.00729 \\ -0.475 \\ 0.153 \\ 0 \end{bmatrix}u.
        \label{eq:747latsys}
\end{aligned}
\end{equation}
 The data for this example was generated by choosing for the input sequence a linear-frequency chirp, defined as
 $u_k= \psi \sin \lp 2\pi\lp \frac{c}{2} t_k^2 + f_0t_k \rp \rp$, where $\psi =1\mathrm{e}-4$, $f_0 = 1\mathrm{e}-4$, $f_1 = 7\mathrm{e}-2$, and $c = \lp f_1 - f_0 \rp / T$.

The data collected was sampled at 10 Hz for $T=30$ seconds. The generation of $y_k$ included a small amount of standard normal Gaussian random noise added to the signal to mimic sensor noise. The pair $\lp y_k, u_k \rp$ along with user defined LQR weighting matrices, $M = \text{diag}[\begin{matrix}10&1&1&10\end{matrix}]$ and $R = I$, are the inputs to the optimization problem \eqref{eq:NLP}. The optimal closed-loop controller matrix was computed as
\begin{equation}\label{eq:747Kopt}
    K_\text{MF} = \begin{bmatrix} 9.2236 & -6.6657 & -3.1473 & -2.9555\end{bmatrix}.
\end{equation}

Since the original linear system is known, the resulting LQR controller matrix, for the same $M$ and $R$, can be computed directly using MATLAB's LQR solver, which gives
\begin{equation}\label{eq:747Klqr}
    K_\text{LQR} = \begin{bmatrix} 9.3477 & -7.4703 & -3.3029 & -2.9165\end{bmatrix}.
\end{equation}

Comparing \eqref{eq:747Kopt} and \eqref{eq:747Klqr} shows nearly identical resulting controllers. Some small deviation from the ground-truth optimal controller is expected due to the presence of noise in the data, $y_k$, as well as the fact a finite difference approximation was used in \eqref{eq:finte diff in V}.

The closed-loop response of the system with each controller was simulated. Figure \ref{fig:747_roll_tracking} illustrates the system response to a $\pm 10^\circ$ roll doublet input. The command tracking performance of the model-free controller closely resembles the performance of the ground-truth LQR controller.

\begin{figure}
    \centering
    \includegraphics[width=0.9\linewidth]{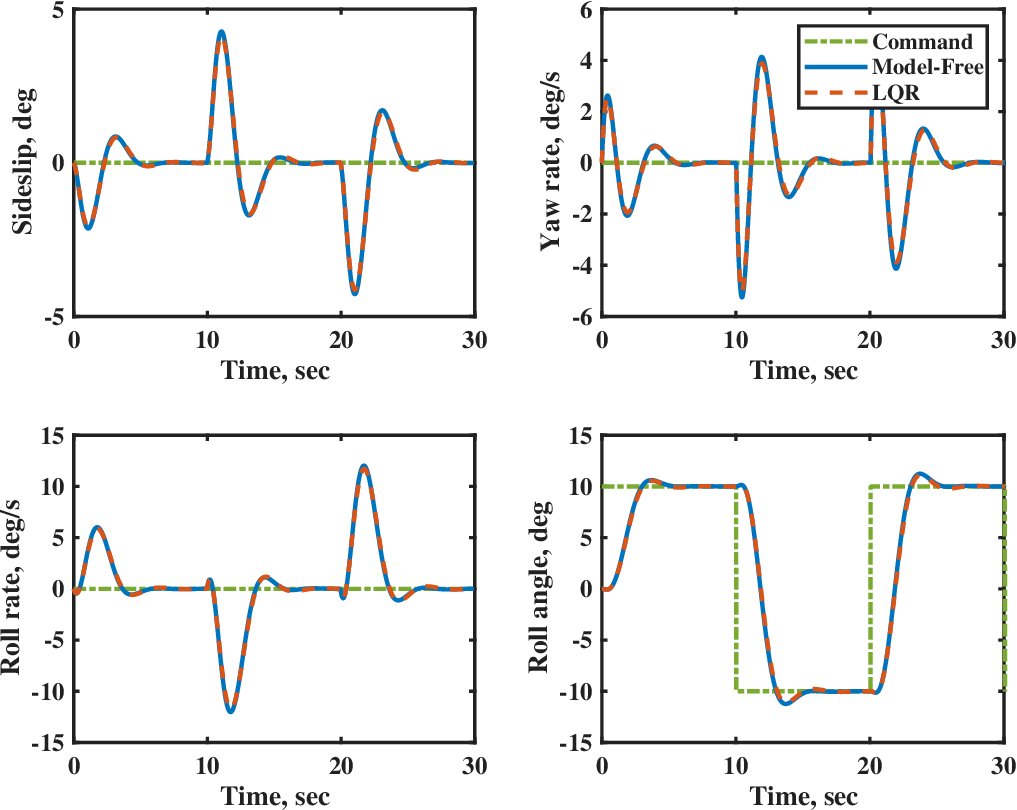}
    \caption{The state response after implementation of a B-747 roll command tracking. The ground truth is generated with $K_{LQR}$ and is shown with the dashed red line. The response when using the controller, $K_{MF}$, computed with the proposed model-free method is shown in blue. Best viewed in color.}
    \label{fig:747_roll_tracking}
\end{figure}

\subsection{Unknown Nonlinear System} \label{sec:results nonlinear}

As a second example, we select a problem of controlling a real-world flight vehicle by utilizing the presented model-free control approach. This process can be used in conjunction with a flight test program to refine vehicle flight performance characteristics that would be unknown or uncertain in the initial design phases.

A high-fidelity nonlinear model of an Holybro X500 V2 quadcopter was implemented in Simulink featuring six-degree-of-freedom (6DOF) equations of motion, electronic speed control (ESC) dynamics, real-world motor and propeller aerodynamic data \cite{tyto1580}, and a FlightGear graphical interface with a joystick input for manual pilot commands. Using a high-fidelity simulation to generate the data was chosen so as to compare the resulting controller to the ground-truth.

The X500-V2 is an X-configuration quadcopter whose motors are labeled 1-4 beginning with the front left motor and counting clockwise. A diagram of the airframe can be seen in Figure \ref{fig:Quad}.

\begin{figure}
    \centering
    \includegraphics[width=0.9\linewidth]{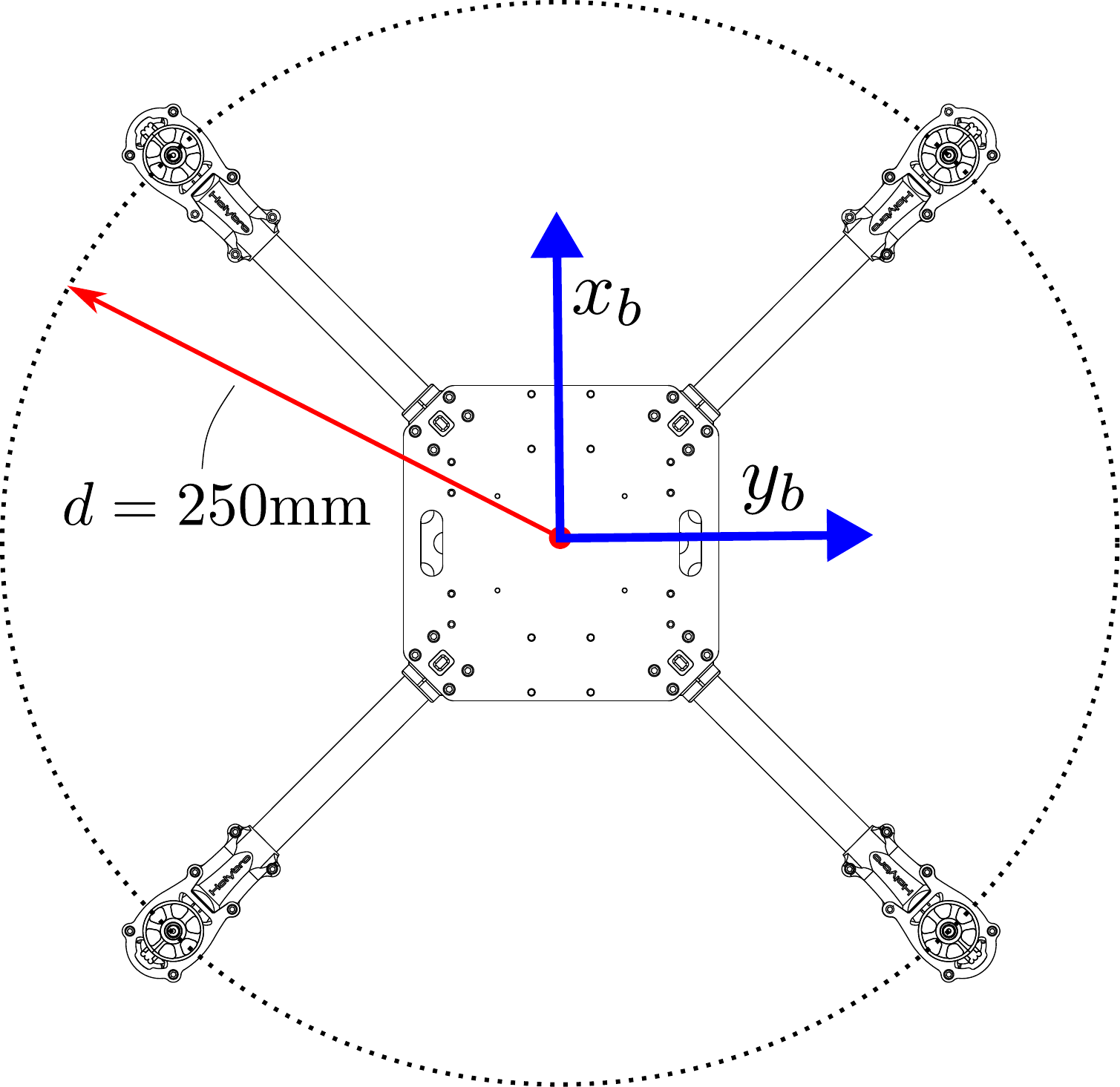}
    \caption{The depiction of the body coordinate frame and dimensions of the Holybro X500 V2 quadcopter for which the Simulink model used for example 2 in Section \ref{sec:results nonlinear} is based on.}
    \label{fig:Quad}
\end{figure}

This simulation uses a Flat-Earth inertial reference frame, $e$, whose principal axis extend in the north-east-down (NED) directions. Additionally, the body-fixed axis, $b$, is defined with the \textit{x}-direction aligned with a vector pointing forward between motors 1 and 2, \textit{y}-direction pointing to the right of the frame between motors 2 and 3, and \textit{z}-direction pointing downward, completing the triad whose origin is at the center of gravity of the body. The twelve states tracked in the simulation are the inertial NED position, $P_e = [\begin{matrix} p_n & p_e & p_d\end{matrix}]^\top$, Euler orientation angles, $\Phi = [\begin{matrix} \phi & \theta & \psi \end{matrix}]^\top$, body-fixed velocity, $V_b = [\begin{matrix} u_b & v_b & w_b\end{matrix}]^\top$, and angular velocity of body $b$ with respect to the inertial frame $e$, $\omega_{b/e} = [\begin{matrix} p & q & r \end{matrix}]^\top$. The translational equations of motion of the quadcopter are given by
\begin{equation}
\begin{cases}
            \begin{aligned}
                \dot P_e & = C_{b/e}(\Phi) V_b\\
                \dot V_b & = \frac{1}{m}F_b - \omega_{b/e} \times V_b,
            \end{aligned}
        \label{eq:6dof trans}
    \end{cases}
\end{equation}
where $C_{b/e}(\Phi)$ is the 3-2-1 Euler rotation matrix from $b$ to $e$. The total external force vector can separated into individual gravity, drag, and propulsive force contributions as
\begin{equation}
\begin{aligned}
        F_b & =  F_{gravity} + F_{drag} + F_{propulsion}.\\
        \end{aligned}
        \end{equation}
 For the quadcopter, these components can be expressed as
\begin{equation}
    \begin{aligned}
        F_b &  = C_{e/b}(\Phi)\begin{bmatrix}
        0\\ 0\\ mg
    \end{bmatrix} + \begin{bmatrix}
        F_{drag,x}\\
        F_{drag,y}\\
        F_{drag,z}\\
    \end{bmatrix} + \begin{bmatrix}
        0\\ 0\\ -T
    \end{bmatrix}.
    \end{aligned}
\label{eq:F_b}
\end{equation}
Each of the drag force components can be computed using information from the drag coefficient $C_d$, a reference area, $S$, and the air density, $\rho$. For example, in the x-direction we have, $F_{drag,x} = - \text{sign}(u_b) \times 1/2\rho u_b^2 \times S_x \times C_{d,x}$. The rotational equations of motion of the quadcopter are given by
\begin{equation}
    \begin{cases}
        \begin{aligned}
            \dot \Phi & = W\lp \Phi\rp \omega_{b/e}\\
            \dot \omega_{b/e} & = I_b^{-1} \left[ M_b - \omega_{b/e} \times \lp I_b \omega_{b/e} \rp \right],
        \end{aligned}
    \end{cases}
    \label{eq:6dof rot}
\end{equation}
where
\begin{equation}
    \begin{aligned}
        W (\Phi) = \begin{bmatrix}
                1 & \sin\phi\tan\theta & \cos\phi\tan\theta\\
                0 & \cos\phi & -\sin\phi\\
                0 & \sin\phi/\cos\theta & \cos\phi/\cos\theta
            \end{bmatrix} 
    \end{aligned}
\end{equation}
is the rotational kinematics matrix and $I_b$ is the moment of inertia matrix for the quadcopter. The applied moment vector can be expanded as
\begin{equation}
    \begin{aligned}
        M_b = \begin{bmatrix}
            M_x \\ M_y \\ M_z
        \end{bmatrix}
    \end{aligned}.
\end{equation}

The simulation was designed to be operated from user-inputs through a joystick yielding a  four dimensional control vector consisting of one throttle and three torque control inputs. The user commands a primary throttle setting, $\Gamma\in[0, 1]$, via a throttle lever, that corresponds to the desired altitude. Rotating the joystick produces three input signals, $\begin{bmatrix}\tau_\text{r} & \tau_\text{p} & \tau_\text{y} \end{bmatrix}^\top \in [-1, 1]$, that perturb the attitude of the quadcopter by specifically increasing or decreasing individual motor speeds to produce the desired body torque. In matrix form the mapping from joystick inputs to commanded motors speeds is
\begin{equation}
    \begin{aligned}
        \begin{bmatrix}
            w_{1,\text{cmd}} \\ w_{2,\text{cmd}}  \\ w_{3,\text{cmd}}  \\w_{4,\text{cmd}} 
        \end{bmatrix} = \begin{bmatrix}
            K_{\Gamma} & -K_{\text{r}} & K_{\text{p}} & K_{\text{y}} \\
            K_{\Gamma} &   K_{\text{r}} & K_{\text{p}} & -K_{\text{y}} \\
            K_{\Gamma} &   K_{\text{r}} & -K_{\text{p}} & K_{\text{y}} \\
            K_{\Gamma} & -K_{\text{r}} & -K_{\text{p}} & -K_{\text{y}} \\
        \end{bmatrix}
        \begin{bmatrix}
            \Gamma \\ \tau_{\text{r}} \\ \tau_{\text{p}} \\ \tau_{\text{y}}
        \end{bmatrix},
        \end{aligned}
\end{equation}
where a scalar gains $K_{\Gamma,\text{r},\text{p},\text{y}}>0$ are used to transform the joystick input signal to appropriate units. The commanded motor speed vector, $w = \begin{bmatrix} w_{1,\text{cmd}} & w_{2,\text{cmd}}  & w_{3,\text{cmd}}  & w_{4,\text{cmd}} \end{bmatrix}^\top $, forms the plant input signals.

Inside the plant model, electronic speed controller dynamics are modeled  to track the relationship between commanded motor speeds, $w_{i,\text{cmd}}$, to achieved motor speed, $w_{i,\text{ach}}$.  The total thrust $T = \sum_{i=1}^4 T_i$ is the sum of all individual motor thrusts, $T_i$, and the applied torques can be computed from motor thrusts as
\begin{equation}
    \begin{aligned}
        \begin{bmatrix}
            M_x \\ M_y
        \end{bmatrix} & = 
        \begin{bmatrix}
            -d\sqrt{2}/2 & d\sqrt{2}/2& d\sqrt{2}/2 & -d\sqrt{2}/2 \\ 
             d\sqrt{2}/2 & d\sqrt{2}/2& -d\sqrt{2}/2 & -d\sqrt{2}/2 \\ 
        \end{bmatrix}
        \begin{bmatrix}
            T_1\\ T_2\\ T_3\\ T_4
        \end{bmatrix}\\
        M_z & = M_1 - M_2 + M_3 - M_4.
    \end{aligned}
    \label{eq:control torques}
\end{equation}
Motor thrusts and moments, $T_i$ and $M_i$, were extracted from a lookup table of experimental data relating commanded motor speeds and the resulting thrust or torque value. The complete set of nonlinear dynamics, $\dot X = f(X,U)$, for the 6DOF model is given with \eqref{eq:6dof trans} and \eqref{eq:6dof rot} where the twelve dimensional vehicle state becomes $X = [\begin{matrix} P_e & V_b & \Phi & \omega_{b/e}\end{matrix}]^\top$. Interested readers should see \cite{stevens_aircraft_2015} for additional 6DOF modeling details. A simplified moment of inertia is calculated from $I^b = \text{diag}\begin{bmatrix}I_{xx} & I_{yy} & I_{zz}\end{bmatrix}$. Physical parameters for the X500 V2 are listed in Table \ref{tab:quad params}.

\begin{table}
\renewcommand{\arraystretch}{1.3}
\caption{\bf X500 V2 Quadcopter Parameters}
\label{tab:quad params}
\centering
\begin{tabular}{|c|c|c|}
\hline
\bfseries Parameter & \bfseries Value & \bfseries Units\\
\hline\hline
Mass, $m$              & 1.3269 & $kg$\\
X-inertia, $I_{xx}$& $0.01295$ & $kg\cdot m^2$\\
Y-inertia, $I_{yy}$& $0.01244$ & $kg\cdot m^2$\\
Z-inertia, $I_{zz}$& $0.01571$ & $kg\cdot m^2$\\
Arm Length, $d$        & 0.25   & $m$\\
Thrust Coefficient, $C_T$ & 3.1539e--5& $N\cdot s^2/rad^2$\\
Torque Coefficient, $C_Q$ & 4.3543e--9&  $N\cdot s^2/rad^2$\\
Min Motor Speed           & 115 & $rad/s$\\
Max Motor Speed           & 907 & $rad/s$\\
\hline
\end{tabular}
\end{table}
The goal is to compute an attitude controller to stabilize the roll and pitch dynamics. First, in-flight state data was recorded by using a pilot-operated joystick to input roll and pitch oscillations into the quadcopter dynamics. Figure \ref{fig:quad_pitchroll_inputs} presents the roll and pitch joystick inputs. These inputs are kept small in magnitude $\lp \pm 1^\circ \rp$ to maintain trajectories near the equilibrium point. Since the system is nonlinear, we expect the system to behave only approximately linearly in a neighborhood around the equilibrium point. By keeping the input small, we avoid a flight region where the response begins to deviate significantly from the approximately linear regime.

Note that while the control inputs in this example were generated manually from a human-operated joystick in an open-loop system, data could also be collected from vehicle using a sub-optimal closed-loop controller.
\begin{figure}
    \centering
    \includegraphics[width=0.9\linewidth]{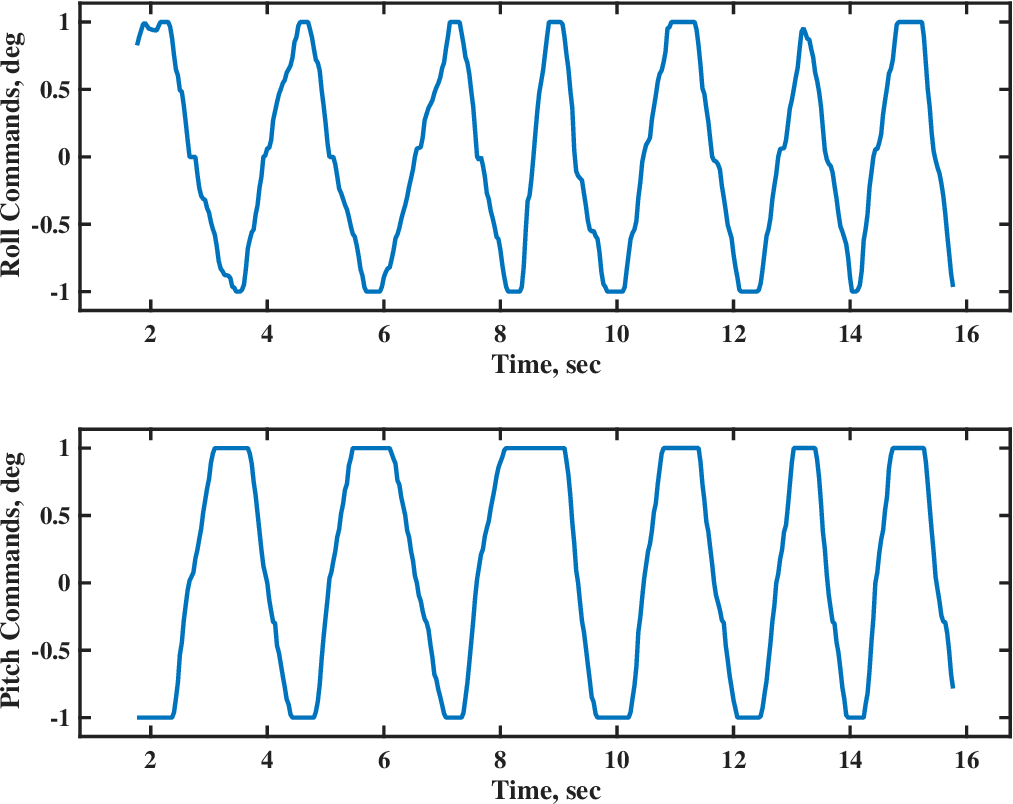}
    \caption{Joystick generated manual control inputs, in the pitch(below) and roll (above) axes. }
    \label{fig:quad_pitchroll_inputs}
\end{figure}

Next, using the generated noisy state and input pair $\lp y_k, u_k \rp$, an optimal controller was computed for the weighting matrices $M = \text{diag}[\begin{matrix}100&100&1&1\end{matrix}]$ and $R = \text{diag}[\begin{matrix}0.1&0.1\end{matrix}]$. We reduce the dimensionality of the state space to only the attitude-related states and inputs for controller synthesis, that is, $x = \begin{bmatrix} \phi & \theta & p & q\end{bmatrix}^\top$ and $u = \begin{bmatrix} \tau_\text{roll} & \tau_\text{pitch}\end{bmatrix}^\top$. The optimal model-free closed loop controller was computed as
\begin{equation}\label{eq:quadKopt}
    K_\text{MF} = \begin{bmatrix} 31.8563 & 0 & 10.2369 & 0 \\
                                   0 & 31.5142 & 0 & 10.2665 \end{bmatrix}.
\end{equation}
The nonlinear quadcopter simulation was linearized about the hover trim condition, a linear system was extracted, and an LQR controller was computed. The LQR gain matrix was calculated as
\begin{equation}\label{eq:quadKlqr}
    K_\text{LQR} = \begin{bmatrix} 31.6228 & 0 & 10.2900 & 0 \\
                                   0 & 31.6228 & 0 & 10.0965 \end{bmatrix}.
\end{equation}
Again, notice the close resemblance between $K_\text{opt}$ and $K_\text{lqr}$, indicating that the model-free controller is approaching the optimal LQR solution.

The performance of each of the controllers was compared by simulating the nonlinear quadcopter closed-loop system. Figure \ref{fig:quad_pitchroll_tracking} shows the quadcopter response to simultaneous roll ($\pm 5^\circ$ at $1/2$Hz) and pitch ($\pm 12^\circ$ at $1/5$Hz) doublets. The command tracking performance of the model-free controller closely resembles the performance of the ground-truth LQR controller.
\begin{figure}
    \centering
    \includegraphics[width=0.9\linewidth]{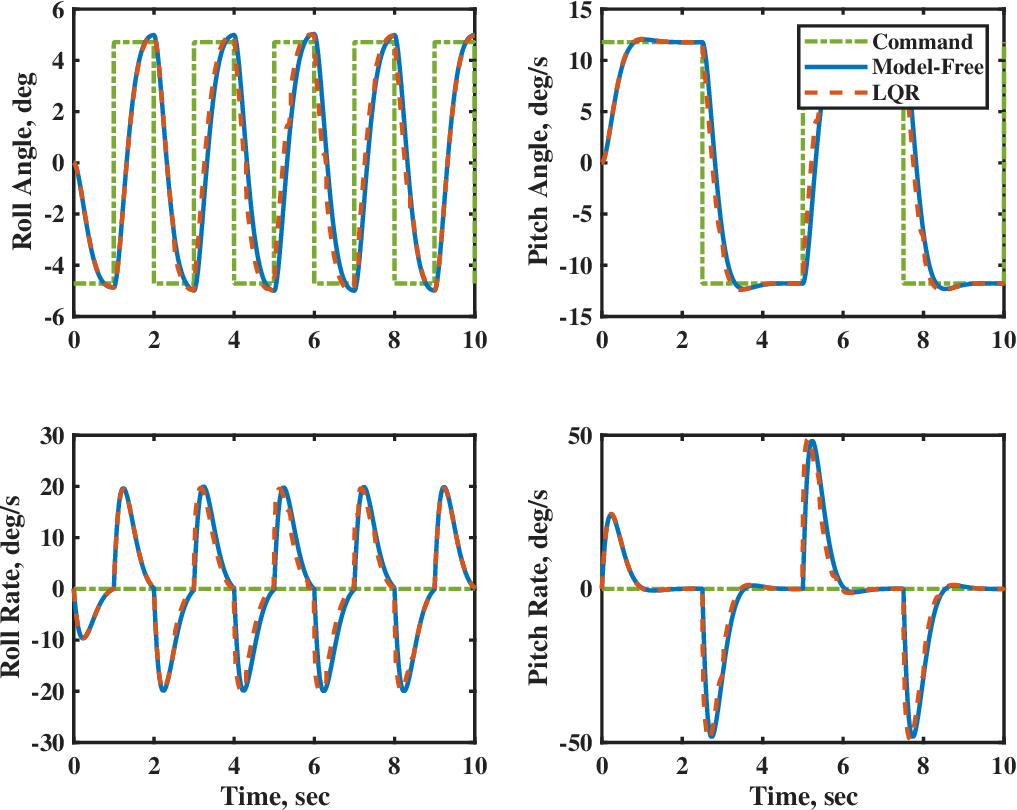}
    \caption{The state response after implementation of an attitude command tracking on a quadcopter 
    drone. The ground truth is generated with $K_{LQR}$ and is shown with the dashed red line. The response when using the controller, $K_{MF}$, computed with the proposed model-free method is shown in blue. Best viewed in color.}
    \label{fig:quad_pitchroll_tracking}
\end{figure}

\section{Conclusion}\label{sec:conclusion}
A model-free approach to infinite horizon LQR controller synthesis, using only data, is presented. Necessary conditions for optimality are derived, using an implicit model-free formulation. This was found by tracking the dynamics of the value function along a trajectory, hence eliminating the dependence on the system matrix. An NLP is constructed which receives a sequence of noisy state measurements, control inputs, and user-defined LQR weighting matrices to compute the optimal feedback control matrix subject to the value function optimality constraint.  We detail how the proposed model-free approach is equivalent, in continuous time, to a specific type of Q-Learning method. Lastly, two examples are presented that indicate that the performance of the model-free controller tracks performance of classical LQR controllers with the benefit of not requiring knowledge of the system matrix.

Future avenues of study include the development of metrics for data collection that ensure a stable estimation of the controller. This would allow users to reduce the total amount of data required for accurate model-free controller synthesis. Additionally, this control design method is planned to be validated on real-world flight vehicles to improve flight performance on systems which are difficult to model comprehensively in simulation programs.


\appendix{}        

Before providing a proof for Lemma \ref{lem:advantage}, we need the following supporting lemma, which is an adaptation of Lemma 6 in \cite{willems_least_1971}:

\begin{lemma}\label{lemma:2}
    Let $P$ be any symmetric solution of the algebraic Riccati equation \eqref{eq:ARE} for arbitrary $x$ and $u$ that satisfy \eqref{eq:LTIsys}. Then on the time interval $\tau\in\left[t,t+T\right]:$
\begin{align}
 & V\left(x\left(t\right)\right)\nonumber \\
 & +\int_{t}^{t+T}\left\Vert u\left(\tau\right)+R^{-1}B^{\top}Px\left(\tau\right)\right\Vert _{R}^{2}d\tau\nonumber \\
 & =V\left(x\left(t+T\right)\right)\nonumber \\
 & +\int_{t}^{t+T}x\left(\tau\right)^{\top}Mx\left(\tau\right)+u\left(\tau\right)^{\top}Ru\left(\tau\right)d\tau.\label{eq:lem for A}
\end{align}
\end{lemma}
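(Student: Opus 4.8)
The plan is to prove the identity by showing that the integrand difference between the two sides of \eqref{eq:lem for A} is exactly the time derivative of $V$ along the trajectory, and then to invoke the fundamental theorem of calculus. First I would note that the main analytic work has already been done in Section~\ref{sec:model-free formulation}: differentiating $V\lp x\lp\tau\rp\rp=x\lp\tau\rp^{\top}Px\lp\tau\rp$ along the dynamics \eqref{eq:LTIsys} and using the algebraic Riccati equation \eqref{eq:ARE} to eliminate $A$ (equivalently, sandwiching \eqref{eq:ARE} by $x^{\top}\lp\cdot\rp x$) yields precisely \eqref{eq:cont. time equality constraint},
\[
\dot V\lp x\lp\tau\rp\rp=x\lp\tau\rp^{\top}PBR^{-1}B^{\top}Px\lp\tau\rp-x\lp\tau\rp^{\top}Mx\lp\tau\rp+2x\lp\tau\rp^{\top}PBu\lp\tau\rp.
\]
Since that relation was established for an \emph{arbitrary} control $u$ and corresponding state $x$ satisfying \eqref{eq:LTIsys}, it holds in exactly the generality required here.

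The one genuinely nontrivial step is a completion of squares on the weighted-norm integrand. Expanding $\|u+R^{-1}B^{\top}Px\|_{R}^{2}=\lp u+R^{-1}B^{\top}Px\rp^{\top}R\lp u+R^{-1}B^{\top}Px\rp$ and using the symmetry of $P$ and $R$ to collapse the two cross terms, I would obtain
\[
\left\Vert u+R^{-1}B^{\top}Px\right\Vert _{R}^{2}=u^{\top}Ru+2x^{\top}PBu+x^{\top}PBR^{-1}B^{\top}Px.
\]
Subtracting the running cost $x^{\top}Mx+u^{\top}Ru$ from this expression cancels the $u^{\top}Ru$ term and leaves exactly the right-hand side of the $\dot V$ identity above, so that
\[
\dot V\lp x\lp\tau\rp\rp=\left\Vert u\lp\tau\rp+R^{-1}B^{\top}Px\lp\tau\rp\right\Vert _{R}^{2}-x\lp\tau\rp^{\top}Mx\lp\tau\rp-u\lp\tau\rp^{\top}Ru\lp\tau\rp.
\]

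To finish, I would integrate this pointwise identity over $\tau\in\left[t,t+T\right]$. The left side integrates to $V\lp x\lp t+T\rp\rp-V\lp x\lp t\rp\rp$ by the fundamental theorem of calculus, and rearranging to move $V\lp x\lp t\rp\rp$ and the norm integral to the left-hand side produces \eqref{eq:lem for A} verbatim. The main point requiring care is the cross-term bookkeeping in the completion of squares: one must verify that $u^{\top}B^{\top}Px$ and its transpose $x^{\top}PBu$ are equal scalars (using $P=P^{\top}$), so that they combine into the single $2x^{\top}PBu$ that matches the term in $\dot V$. Beyond that, the argument is a direct verification. I would emphasize that the identity is valid for \emph{any} admissible control, not merely the optimal one, since nowhere is $u$ set equal to $-R^{-1}B^{\top}Px$; this is what makes the result usable as a constraint along arbitrary data trajectories.
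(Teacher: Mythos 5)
Your proposal is correct and is essentially the paper's own argument in a different order: the appendix proof also combines the fundamental theorem of calculus with the Riccati substitution $x^{\top}\left(A^{\top}P+PA\right)x=x^{\top}\left(PBR^{-1}B^{\top}P-M\right)x$ and the identical completion of squares $\left\Vert u+R^{-1}B^{\top}Px\right\Vert _{R}^{2}-u^{\top}Ru-x^{\top}Mx$, merely carrying out the algebra inside the integrand of $V\left(x\left(t+T\right)\right)-V\left(x\left(t\right)\right)$ rather than first stating the pointwise $\dot{V}$ identity and then integrating. Your observation that the sandwich identity needs only symmetry of $P$ and the ARE (not optimality of $u$) matches the generality of the lemma, so there is no gap.
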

\begin{proof}
    Differentiation by parts of \eqref{eq:optimal value funct} yields
    \begin{equation}
        \begin{aligned}
            V(x(t+T)) - & V(x(t)) = \int_t^{t+T} \frac{d}{d\tau} \lp x(\tau)^\top Px(\tau)\rp d\tau \\
            & = \int_t^{t+T} 2x(\tau)^\top P \lp Ax(\tau) + Bu(\tau)\rp d\tau. 
        \end{aligned}
        \label{eq:proof2e1}
    \end{equation}
    The first equality follows from the fundamental theorem of calculus, and the second equality follows from \eqref{eq:LTIsys}. Applying the Riccati equation \eqref{eq:ARE}, the integrand becomes
    \begin{equation}
        \begin{aligned}
             2x^\top& P (Ax + Bu) \\
             & = x^\top (A^\top P + PA)x + u^\top B^\top Px + x\top PBu\\
             & = x^\top (PBR^{-1}B^\top P - M)x + u^\top B^\top Px + x^\top PBu\\
             & = \|u + R^{-1}B^\top Px\|^2_R - u^{\top}Ru - x^\top Mx.
        \end{aligned}
        \label{eq:proof2e2}
    \end{equation}
    Evaluating \eqref{eq:proof2e2} at $x(\tau)$ and $u(\tau)$  and substituting into the integrand of \eqref{eq:proof2e1} produces the correct result.
\end{proof}

We now give the proof of Lemma \ref{lem:advantage}.

\begin{proof}
For simplicity we define the quantity
\[
W\left(\tau\right):=\left\Vert u\left(\tau\right)+R^{-1}B^{\top}Px\left(\tau\right)\right\Vert _{R}^{2}.
\]
Using this notation, \eqref{eq:Advantage formula} can be written as
\begin{align*}
{\mathbb{A}}\left(t;x \lp \cdot \rp,u\left(\cdot\right)\right) & :=\int_{t}^{\infty}W\left(\tau\right)d\tau\\
 & =\int_{t}^{t+T}W\left(\tau\right)d\tau+\int_{t+T}^{\infty}W\left(\tau\right)d\tau.
\end{align*}
This implies that
\begin{equation}
{\mathbb{A}}\left(t;x \lp \cdot \rp,u\left(\cdot\right)\right)=\int_{t}^{t+T}W\left(\tau\right)d\tau+{\mathbb{A}}\left(t+T;x \lp \cdot \rp,u\left(\cdot\right)\right).\label{eq:Advantage proof}
\end{equation}
By the definition given in \eqref{eq:Advantage def},
\[
{\mathbb{A}}\left(t;x,u\left(\cdot\right)\right):=Q\left(t,x\left(t\right),u\left(\cdot\right)\right)-V\left(x\left(t\right)\right).
\]
This implies that we can write \eqref{eq:Advantage proof} as
\begin{align*}
Q\left(t;x\left(\cdot\right),u\left(\cdot\right)\right)-V\left(x\left(t\right)\right)= & \int_{t}^{t+T}W\left(\tau\right)d\tau\\
 & +Q\left(t+T;x\left(\cdot\right),u\left(\cdot\right)\right)\\
 & -V\left(x\left(t+T\right)\right).
\end{align*}
Rearranging the above equation and substituting from Lemma \ref{lemma:2} yields
\begin{align*}
Q\left(t;x\left(\cdot\right),u\left(\cdot\right)\right) &=  Q\left(t+T;x\left(\cdot\right),u\left(\cdot\right)\right)\\
 & + \int_{t}^{t+T} \Big\{x\left(\tau\right)^{\top}Mx\left(\tau\right)\\
 &\quad+u\left(\tau\right)^{\top}Ru\left(\tau\right) \Big\} d\tau,
\end{align*}
and we have arrived at our result. 
\end{proof}

\acknowledgments
This research was supported by the Office of Naval Research under Grant: N00014-24-1-2322. 

The views expressed in this article are those of the authors and do not necessarily reflect the official policy or position of the Air Force, the Department of Defense or the U.S. Government.

Distribution Statement A. Approved for Public Release; Distribution is unlimited. PR 24-0248.

\bibliographystyle{IEEEtran}
\bibliography{main}

\thebiography


\begin{biographywithpic}
{Sean R. Bowerfind}{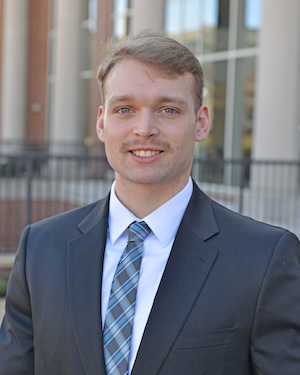}
is a Doctoral student in the Department of Electrical and Computer Engineering at Auburn University. His research areas include flight mechanics, optimal control, trajectory optimization, and applied mathematics. He received his B.S. in Aerospace Engineering from Auburn University in 2022 and his M.S. in Aerospace Engineering from Auburn University in 2023. He was the recipient of Robert G. Pitts Award for the Most Outstanding Senior in Aerospace Engineering and named the Undergraduate Student of the Year by the AIAA Greater Huntsville Section in 2022. Additionally, Sean is a commissioned officer in the United States Air Force, currently assigned to the Air Force Institute of Technology Civilian Institutions Program, and will be attending Euro-NATO Joint-Jet Pilot Training upon completion of his Ph.D. 
\end{biographywithpic} 

\begin{biographywithpic}
{Matthew R. Kirchner}{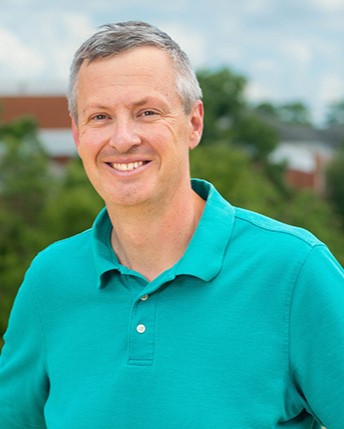}
is an Assistant Professor of Electrical and Computer Engineering at Auburn University. He received his B.S. in Mechanical Engineering from Washington State University in 2007, a M.S. in Electrical Engineering from the University of Colorado at Boulder in 2013, and a Ph.D. in Electrical Engineering from the University of California, Santa Barbara in 2023. He spent over 16 years at the Naval Air Warfare Center Weapons Division, China Lake, first joining the Navigation and Weapons Concepts Develop Branch in 2007 as a staff engineer. In 2012 he transferred into the Physics and Computational Sciences Division in the Research and Intelligence Department, where he served as a senior research scientist. His research interests include level set methods for optimal control, differential games, and reachability; multi-vehicle robotics; nonparametric signal and image processing; and navigation and flight control. He was the recipient of a Naval Air Warfare Center Weapons Division Graduate Academic Fellowship from 2010 to 2012; in 2011 was named a Paul Harris Fellow by Rotary International and in 2021 was awarded a Robertson Fellowship from the University of California in recognition of an outstanding academic record.
\end{biographywithpic}


\begin{biographywithpic}
{Gary A. Hewer}{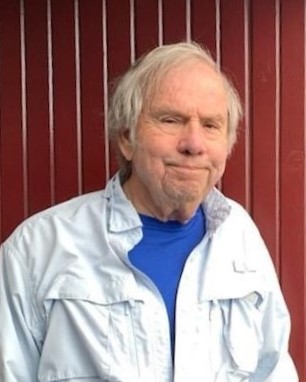}
received his B.A. degree from Yankton College, South Dakota in 1962, and his M.S. degree and Ph.D. degree in Mathematics from Washington State University in 1964 and 1968, respectively. Since 1968, he has worked at the Naval Air Warfare Center Weapons Division, China Lake. He is currently the NAVAIR Senior Scientist for Image and Signal Processing. In 1986 and 1987 he was the interim Scientific Officer for Applied Analysis in the Mathematics Division at the Office of Naval Research (ONR). During his over 50 years of experience he has performed research on control, radar guidance, wavelet applications for both compression and small target detection, image registration and processing, probability theory, and autonomy. In 1987, he received the then Naval Weapons Center’s Technical Director’s Award for his work in control theory and was elected a Senior Fellow of the Naval Weapons Center in 1990 for his work in radar tracking, target modeling, and control theory. In 1998, he
was awarded the Navy Meritorious Civilian Award for his contributions as a Navy research scientist, and in 2002, he was inducted as a NAVAIR fellow. Recently, he was named an Esteemed NAVAIR Fellow.
\end{biographywithpic}

\begin{biographywithpic}
{D. Reed Robinson}{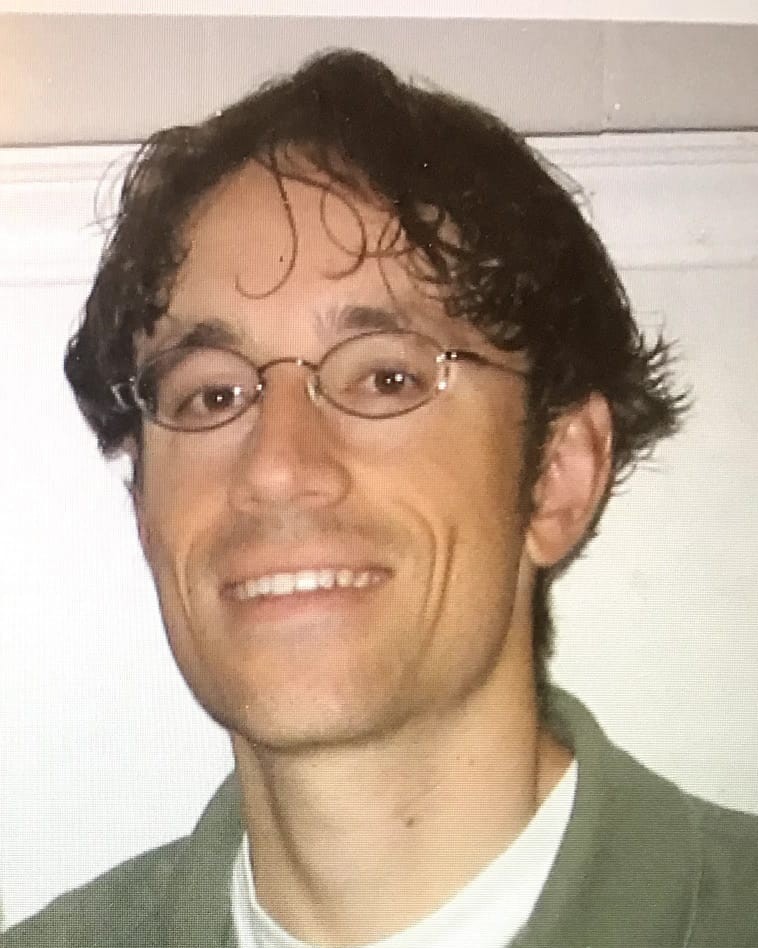}
is a Senior Control Systems Engineer at the Naval Air Warfare Center, Weapons Division since 2003. He received his M.S. in Mechanical Engineering from Utah State University in 2002 with an emphasis in dynamic systems and control. He has developed guidance and autopilot systems for autonomous air and ground vehicles including delivery helicopters and high-speed ground targets. In 2008, he invented spiral-based path-following embedded control algorithms enabling 100+ mph high-speed autonomous ground vehicles (U.S. Patent 8,930,058). More recent work is focused on multi-agent planning algorithms based on model-predictive optimal control and embeddable nonlinear optimization algorithms. He is the co-creator of the NAWCWD Multi-agent Trajectory Planner (MTP), the decABLS  distributed trajectory planning system, and the Bitwise Level Set planner, which uses bitwise operations to enable fast global planning for Dubins dynamics. He was inducted as a NAWCWD Associate Fellow in 2022, and received the McLean Award in 2024 in recognition of inventions important to NAWCWD.
\end{biographywithpic}

\begin{biographywithpic}
{Paula Chen}{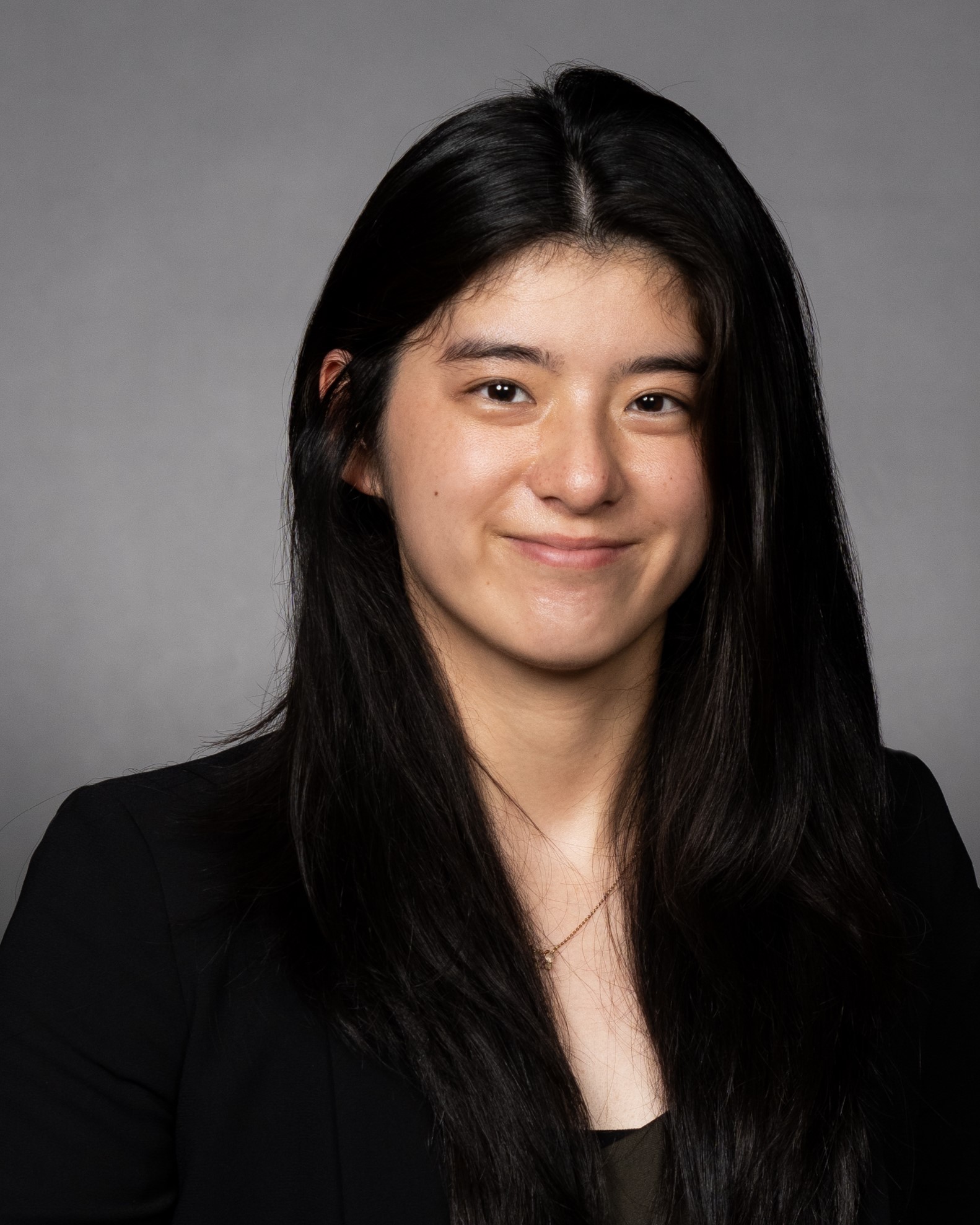}
is a mathematician at the Naval Air Warfare Center Weapons Division, China Lake. She earned her B.A. in mathematics from Dartmouth College in 2017 and her Ph.D. in applied mathematics from Brown University in 2023. Her research interests include Hamilton-Jacobi partial differential equations, optimal control problems, scientific machine learning, efficient algorithms for high-dimensional problems, and FPGA implementations for scientific computing.  
\end{biographywithpic}

\begin{biographywithpic}
{Alireza Farahmandi}{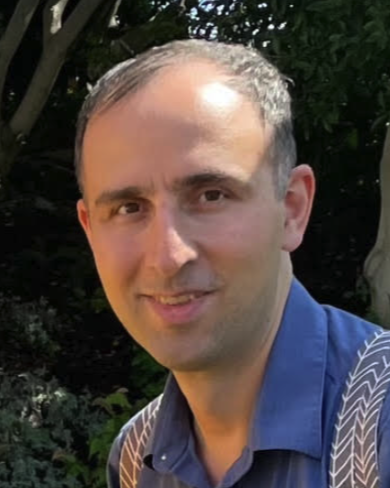}
received his M.S. and Ph.D. in Physics from the University of California, Riverside in 2011 and 2014. He is a research scientist at the Naval Air Warfare Center Weapons Division, China Lake, where he leads projects on intelligent autonomous systems at the Autonomy Research Arena (AuRA). His research interests include adaptive control, autonomous systems and machine learning. He is also an associate faculty member at Barstow Community College where he develops and teaches courses in Astronomy. 
\end{biographywithpic}

\begin{biographywithpic}
{Katia Estabridis}{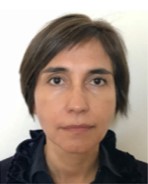}
earned her PhD in Electrical and Computer Engineering from the University of California, Irvine.  She is the Director for the Autonomy Research Arena (AuRA) at the Naval Air Weapons Center Weapons Division (NAWCWD) at China Lake, CA.  She is also the Chief Scientist for Autonomy at the Research Department, NAWCWD.  She leads the Autonomy team on several basic and applied research projects. They include optimal control-based multi-agent trajectory planning algorithms, resource allocation based on multi-objective optimization techniques, reinforcement learning, adaptive control methods and data driven methods for EW applications.  She is a NAWCWD Fellow, holds two patents for facial recognition algorithms and has received the Michelson NAWCWD Award for her Autonomy contributions.  
\end{biographywithpic}

\balance

\end{document}